\theoremstyle{plain}
\newtheorem{theorem}{Theorem}[section]
\newtheorem{proposition}[theorem]{Proposition}
\newtheorem{definition}[theorem]{Definition}
\newtheorem{example}[theorem]{Example}
\newcommand{\norm}[1]{\lVert #1 \rVert}
\title{A Topological Data Analysis Framework for Quantifying Necrosis in Glioblastomas}
\author{Francisco Téllez}
\email{fj.tellez@uniandes.edu.co}
\address{Universidad de Los Andes, Bogot\'{a}, Colombia.}
\author{Enrique Torres-Giese}
\email{enrique.torresgiese@twu.ca}
\address{Trinity Western University, Langley, Canada.}
\begin{document}

\begin{abstract} 
In this paper, we introduce a shape descriptor that we call "interior function". This is a Topological Data Analysis (TDA) based descriptor that refines previous descriptors for image analysis. Using this concept, we define subcomplex lacunarity, a new index that quantifies geometric characteristics of necrosis in tumors such as conglomeration. Building on this framework, we propose a set of indices to analyze necrotic morphology and construct a diagram that captures the distinct structural and geometric properties of necrotic regions in tumors. We present an application of this framework in the study of MRIs of Glioblastomas (GB). Using cluster analysis, we identify four distinct subtypes of Glioblastomas that reflect geometric properties of necrotic regions.  
\end{abstract}

\maketitle


\section{Introduction}
\label{sec:introduction}

Recent applications of Topological Data Analysis (TDA) have focused on defining shape descriptors that quantify geometric characteristics, particularly in the context of image analysis. Tools such as the Smooth Euler Characteristic ~\cite{SECTGBM} and the Persistence Homology Transform \cite{PHT} use function filtration techniques in persistent homology to generate topological summaries of shapes, demonstrating the potential of TDA as a powerful analytical tool. In this paper, we propose a TDA-based framework to analyze how a 2-dimensional shape fills its surrounding space. We apply this approach to Glioblastoma MRI scans; more precisely, we study Gliobastomas by quantifying and describing their necrotic regions.

It is well established that a high degree of necrosis in patients with Glioblastomas is correlated with poor prognosis \cite{Barker}, \cite{Raza}, \cite{Yee}. A common measure of necrosis is the necrotic fraction (see for instance, \cite{fraction}), but this does not take into account the shape of the tumor and its necrosis. On the other hand, studies suggest that the fractal dimension and lacunarity of the necrotic region's border are also predictive of prognosis in glioblastoma cases \cite{Curtin}. Our goal is to develop a framework to explore necrotic geometry that emphasizes the spatial properties such as conglomeration of necrotic regions. This approach emphasizes not only the geometry but also the topology of a tumor, rather than relying solely on area-based metrics (such as necrotic fraction) or fractal properties (such as the fractal dimension and lacunarity).

For the latter purpose, we introduce a new topology-based summary for images, which we call the "interior function." This is a numerical descriptor of images that assigns higher values to regions with greater conglomeration. Using this function, we define indices to quantify pixel clustering and propose the introduction of a diagram to characterize how a 2-dimensional image fills its surrounding space. We present an application of this framework to cluster Glioblastomas based on necrotic cell aggregation, identifying 4 distinct types of tumors.


\section{The interior function}
\label{int_function}

The Persistent Homology Transform (PHT) \cite{PHT} is a tool that incorporates the use of the dot product of $\mathbb{R}^{n}$ as a filtration function for persistent homology. The PHT works with three ingredients: the unit sphere $N=\mathbb{S}^{n-1}$ as the "lens" space; the functions $f_v(x)=x\cdot v$ defined on the lens space, where $v$ is a given point in $N$; and a space $M\subset \mathbb{R}^{n}$ which is equivalent to a finite simplicial complex. Following the construction of the PHT and the proof of its stability, we define the "Interior Surface" transform by changing the lens space and the functions defined on the lens space. 

Let us denote by $\mathcal{D}$ the space of persistent diagrams, which is a metric space when equipped with the bottleneck distance $d_B$ \cite{Carlsson}. When applying filtration by sub-level sets with respect to a function $f$ we obtain a diagram for each homological degree. We will denote by $\mathcal{D}_k$ the subspace of degree  $k$ diagrams, and we will use $\mathcal{D}_k(f)$ whenever we need to make clear the dependence upon the filtration function. In this paper we will focus on spaces that are homotopy equivalent to finite simplicial complexes. This category includes finite CW-complexes and cubical complexes (see for instance \cite{Hatcher}).

\begin{definition}
    Let $M\subseteq \mathbb{R}^n$ be a finite simplicial complex. Let $d:\mathbb{R}^n\times \mathbb{R}^n\to \mathbb{R}$ be a metric. For $v\in \mathbb{R}^n$ consider the function:
    \begin{equation*}
        h_v:M\to \mathbb{R},h_v(x)=-d(v,x).
    \end{equation*}
    We define the Interior Function Transform as:
    \begin{equation*}
        IFT:\mathbb{R}^n\to \mathcal{D}, v\mapsto \mathcal{D}_{n-1}(h_v).
    \end{equation*}
\end{definition}

\begin{proposition}
    The interior function transform $IFT:\mathbb{R}^n\to \mathcal{D}$ associated to a finite simplicial complex $M\subseteq \mathbb{R}^n$ is continuous.
\end{proposition}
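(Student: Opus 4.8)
The plan is to reduce this to the known stability theorem for persistent homology under sub-level set filtrations: if two tame functions $f,g$ on a fixed space $M$ satisfy $\norm{f-g}_\infty < \varepsilon$, then $d_B(\mathcal{D}_k(f),\mathcal{D}_k(g)) < \varepsilon$ for every homological degree $k$ (this is the content of the stability theorem in \cite{Carlsson}, and it is exactly the mechanism used to prove stability of the PHT in \cite{PHT}). So it suffices to show that the assignment $v \mapsto h_v$ is continuous from $\mathbb{R}^n$ into $C(M)$ with the sup norm; composing with $\mathcal{D}_{n-1}(-)$, which is $1$-Lipschitz by stability, then gives continuity of $IFT$.

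The key estimate is the following: for $v, w \in \mathbb{R}^n$ and any $x \in M$,
\[
\bigl| h_v(x) - h_w(x) \bigr| = \bigl| d(w,x) - d(v,x) \bigr| \le d(v,w),
\]
by the triangle inequality for the metric $d$. Hence $\norm{h_v - h_w}_\infty \le d(v,w)$, so $v \mapsto h_v$ is in fact $1$-Lipschitz (and uniformly continuous) as a map $\mathbb{R}^n \to C(M)$. Since $M$ is compact (a finite simplicial complex) and each $h_v$ is continuous, $h_v$ indeed lies in $C(M)$; and because $M$ is equivalent to a finite simplicial complex and $d$ is a metric, each $h_v$ is tame in the sense needed for the persistence diagram $\mathcal{D}_{n-1}(h_v)$ to be well-defined with finitely many points off the diagonal. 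Combining the two Lipschitz bounds:
\[
d_B\bigl( IFT(v), IFT(w) \bigr) = d_B\bigl( \mathcal{D}_{n-1}(h_v), \mathcal{D}_{n-1}(h_w) \bigr) \le \norm{h_v - h_w}_\infty \le d(v,w),
\]
which shows $IFT$ is $1$-Lipschitz, in particular continuous.

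One subtlety deserves attention, and I expect it to be the main point requiring care rather than a genuine obstacle: the stability theorem requires the filtration functions to be tame (finitely many homotopy types across the filtration, or at least $q$-tameness). For a general metric $d$ on $\mathbb{R}^n$ this needs a short argument — for the Euclidean metric, or any metric inducing the standard topology for which $M$ is a definable/PL set, the functions $h_v(x) = -d(v,x)$ are tame because sub-level sets $\{h_v \ge -t\} = M \cap \overline{B_d(v,t)}$ form a one-parameter family of compact sets changing homotopy type only finitely often. If the authors intend $d$ to be the Euclidean metric (consistent with the PHT background), this is immediate; otherwise one restricts to metrics for which this tameness holds, and this should be flagged as a standing hypothesis. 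The remaining steps — the triangle-inequality estimate and the invocation of stability — are entirely routine.
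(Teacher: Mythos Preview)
Your proposal is correct and follows essentially the same route as the paper: use the triangle inequality for $d$ to get $\norm{h_v-h_w}_\infty\le d(v,w)$, then invoke bottleneck stability to conclude that $IFT$ is $1$-Lipschitz. The paper simply asserts tameness of each $h_v$ from the fact that $M$ is a finite simplicial complex, whereas you discuss this point more carefully; otherwise the arguments are identical.
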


\begin{proof} 
    Note that for any $v\in \mathbb{R}^n$, the function $h_v$ is tame since $M$ is a finite simplicial complex. Let $v_1,v_2\in \mathbb{R}^n$. Given $x\in M$, by the reverse triangle inequality for metric spaces,
    \begin{equation*}
        |h_{v_1}(x)-h_{v_2}(x)|=|d(v_1,x)-d(v_2,x)|\leq d(v_1,v_2).
    \end{equation*}
    Therefore,
    \begin{equation*}
        \norm{h_{v_1}-h_{v_2}}_{\infty}\leq d(v_1,v_2).
    \end{equation*}
    By the Bottleneck stability theorem ~\cite{Carlsson}, 
    \begin{equation*}
        d_B(\mathcal{D}_{n-1}(h_{v_1}),\mathcal{D}_{n-1}(h_{v_2}))\leq \norm{h_{v_1}-h_{v_2}}_{\infty}.
    \end{equation*}
    Hence,
    \begin{equation*}
        d_B(IFT(v_1),IFT(v_2))=d_B(\mathcal{D}_{n-1}(h_{b_1}),\mathcal{D}_{n-1}(h_{v_2}))\leq d(v_1,v_2).
    \end{equation*}
    This proves that the interior function transform is Lipschitz continuous, therefore we can conclude that $IFT$ is a continuous function.
\end{proof}

Persistence landscapes \cite{BubenikSTDA} were introduced as a way of transforming the information given by persistence diagrams into an object better suited for statistical purposes. We will use persistence landscapes to define the interior function.  

\begin{definition}[\cite{BubenikSTDA}, p. 83]
    Let $\{(b_i,d_i)\}_{i=1}^{n}$ be a persistence diagram. The persistence Landscape of the diagram is the function $\lambda:\mathbb{N}\times \mathbb{R}\to \overline{\mathbb{R}}$ such that
    \begin{equation*}
        \lambda_k(t)=\text{$k$th largest value of $min(t-b_i,d_i-t)_+$},
    \end{equation*}
    where $c_+$ denotes $max(c,0)$.
\end{definition}

The space of persistence landscapes associated to a persistence diagram is a normed vector space with the $p$-norm for landscapes.

\begin{definition}[\cite{BubenikSTDA}, p.84]
    Let $\lambda:\mathbb{N}\times \mathbb{R}\to \overline{\mathbb{R}}$ be a persistence landscape associated to a persistence diagram $D=\{(b_i,d_i)\}_{i=1}^{n}$. The $p$-norm of a Landscape is
    \begin{equation*}
        \norm{\lambda}_p=\sum_{k=0}^{\infty}\norm{\lambda_k}_p,
    \end{equation*}
    when $1\leq p<\infty$ and
    \begin{equation*}
        \norm{\lambda}_{\infty}=\sup_{k\in \mathbb{N}}\norm{\lambda_k}_{\infty}.
    \end{equation*}
    The $p$-landscape distance of diagrams $D$ and $D^{\prime}$ is
    \begin{equation*}
        \Lambda(D,D^{\prime})=\norm{\lambda-\lambda^{\prime}}_{p}.
    \end{equation*}
\end{definition}

\begin{definition}
    Let $M\subseteq \mathbb{R}^n$ be a finite simplicial complex, $d:\mathbb{R}^n\times \mathbb{R}^n\to \mathbb{R}$ a metric and $h_v:M\to \mathbb{R}$ the function of definition 2.1. Let $\lambda^{v}:\mathbb{N}\times \mathbb{R}\to \mathbb{R}$ be the Landscape associated to the diagram $\mathcal{D}(h_v)$. We define the interior function as
    \begin{equation*}
        I:\mathbb{R}^n\to \mathbb{R}, I(x)=2\norm{\lambda^{x}}_{\infty}.
    \end{equation*}
\end{definition}

As a consequence of Proposition 5 in \cite{BubenikSTDA}, $I(x)$ is equal to the length of the longest interval in the diagram corresponding to function filtration $h_x:M\to \mathbb{R}$.

\begin{proposition}
    Let $M\subseteq \mathbb{R}^n$, $d:\mathbb{R}^n\times \mathbb{R}^n\to \mathbb{R}$ and $I:\mathbb{R}^n\to \mathbb{R}$ as in Definition 2.4. The interior surface is continuous.
\end{proposition}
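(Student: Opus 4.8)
The plan is to prove that $I$ is $2$-Lipschitz with respect to the metric $d$, which immediately yields continuity. The argument just chains together three facts: the reverse triangle inequality for the landscape $\infty$-norm, the stability of persistence landscapes under the bottleneck distance, and the Lipschitz bound for the $IFT$ established in the earlier proposition.

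First I would fix $v_1,v_2\in\mathbb{R}^n$ and record the setup. Since $M$ is a finite simplicial complex, each function $h_{v_i}$ is tame, so $\mathcal{D}_{n-1}(h_{v_i})$ is a well-defined persistence diagram with finitely many off-diagonal points, and the associated landscape $\lambda^{v_i}$ lies in the normed vector space of landscapes introduced above; in particular $\norm{\lambda^{v_i}}_{\infty}$ is finite (indeed, by the consequence of \cite{BubenikSTDA} noted above, $I(v_i)=2\norm{\lambda^{v_i}}_{\infty}$ is the length of the longest bar of $\mathcal{D}_{n-1}(h_{v_i})$).

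Then I would run the estimate
\begin{align*}
|I(v_1)-I(v_2)| &= 2\,\big|\,\norm{\lambda^{v_1}}_{\infty}-\norm{\lambda^{v_2}}_{\infty}\,\big| \\
&\leq 2\,\norm{\lambda^{v_1}-\lambda^{v_2}}_{\infty} \\
&\leq 2\, d_B\big(\mathcal{D}_{n-1}(h_{v_1}),\, \mathcal{D}_{n-1}(h_{v_2})\big) \\
&= 2\, d_B\big(IFT(v_1),\, IFT(v_2)\big) \\
&\leq 2\, d(v_1,v_2),
\end{align*}
where the first step uses $I(v_i)=2\norm{\lambda^{v_i}}_{\infty}$ together with the reverse triangle inequality for the norm $\norm{\cdot}_{\infty}$ on landscapes; the second step is the stability theorem for persistence landscapes in its $p=\infty$ form (the sup landscape distance of two diagrams is bounded by their bottleneck distance) \cite{BubenikSTDA}; the equality is the definition of the $IFT$; and the last step is the earlier proposition on continuity of the $IFT$. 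Hence $I$ is Lipschitz, and therefore continuous.

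I expect essentially no serious obstacle here, since the statement is a corollary of already-cited stability results, but two points deserve care. First, one must invoke landscape stability in its $\infty$-version, so that it controls $\norm{\lambda}_{\infty}=\sup_k\norm{\lambda_k}_{\infty}$ rather than an $L^p$ quantity. Second, the reverse triangle inequality is only legitimate once the landscapes are known to lie in the relevant normed space, which is precisely why finiteness of $M$ — hence finiteness and finite length of all bars of $\mathcal{D}_{n-1}(h_v)$ — is recorded at the outset. Finally, if continuity is wanted for the usual topology of $\mathbb{R}^n$ rather than the topology induced by $d$, one adds the observation that the metrics of interest (those induced by norms on $\mathbb{R}^n$) are topologically equivalent to the Euclidean one.
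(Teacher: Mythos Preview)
Your proof is correct and follows essentially the same route as the paper: reverse triangle inequality for the landscape $\infty$-norm, the $\infty$-landscape stability bound by the bottleneck distance from \cite{BubenikSTDA}, and then the Lipschitz estimate $d_B(\mathcal{D}_{n-1}(h_{v_1}),\mathcal{D}_{n-1}(h_{v_2}))\leq d(v_1,v_2)$. The only cosmetic difference is that you invoke the latter by citing the earlier $IFT$ proposition, whereas the paper re-derives it inline via the Bottleneck Stability Theorem and $\norm{h_{v_1}-h_{v_2}}_{\infty}\leq d(v_1,v_2)$.
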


\begin{proof}
    We will prove Lipschitz continuity. Let $v_1,v_2\in \mathbb{R}^n$. We have the diagrams $\mathcal{D}(h_{v_1})$, $\mathcal{D}(h_{v_2})$ and the associated persistence landscapes $\lambda^{v_1}$, $\lambda^{v_2}$. By Theorem 13 in \cite{BubenikSTDA}, we know that:
    \begin{equation*}
        \norm{\lambda^{v_1}-\lambda^{v_2}}_{\infty}= \Lambda_{\infty}\left(\mathcal{D}(h_{v_1}),\mathcal{D}(h_{v_2})\right)\leq  d_B(\mathcal{D}(h_{v_1}),\mathcal{D}(h_{v_2})).
    \end{equation*}
    By the Bottleneck Stability Theorem \cite{Carlsson},
    \begin{equation*}
        d_B(\mathcal{D}(h_{v_1}),\mathcal 
        D(h_{v_2}))\leq \norm{h_{v_1}-h_{v_2}}_{\infty}. 
    \end{equation*}
    As in Proposition 2.1, note that $\norm{h_{v_1}-h_{v_2}}_{\infty}\leq d(v_1,v_2)$. Therefore,
    \begin{equation*}
        \norm{\lambda^{v_1}-\lambda^{v_2}}_{\infty}\leq d(v_1,v_2).
    \end{equation*}
    Also, by the reverse triangle inequality for normed vector spaces,
    \begin{equation*}
        |IS(v_1)-IS(v_2)|=2|\norm{\lambda^{v_1}}_{\infty}-\norm{\lambda^{v_2}}_{\infty}|\leq 2\norm{\lambda^{v_1}-\lambda^{v_2}}_{\infty}\leq 2\norm{v_1-v_2}_{\infty}.
    \end{equation*}
    Since the interior function is Lipschitz continuous, we can conclude that the interior function is continuous.
\end{proof}

\begin{example}
    The interior surface can be computed in the case of black and white images by considering the cubical complex associated to such an image and embedding it into $\mathbb{R}^2$, for our purposes we will always consider embeddings into $[-1,1]^2$. Let us consider two images that are related to an MRI of a GBM tumour. Figure \ref{fig:fig1} displays the core and enhanced image associated to a glioblastoma MRI, with a plot of the associated interior functions.

    \begin{figure}[h]
        \centering
        \includegraphics[width=0.5\linewidth]{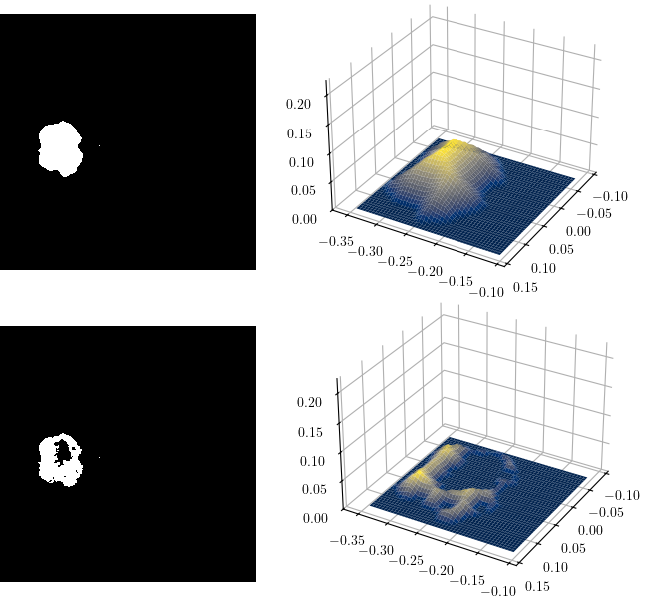}
        \caption{The image in the upper left corner represents the core image of an MRI scan of a glioblastoma, while the bottom left image shows its enhanced version. On the right, a visualization depicts the graphs of the corresponding interior functions.}
        \label{fig:fig1}
    \end{figure}

    Several characteristics of the interior function are displayed in the image. First, note that the function is positive only in regions where the tumour's connected components are located, while it takes a null value elsewhere. Furthermore, the interior function of the enhanced image exhibits lower values than those of the interior function of the core image, and in the case of the enhanced image s, the respective function takes a null value in the space where necrosis is located. This difference serves as the basis for measuring how the image fills its surrounding space.
\end{example}


\section{Indices for measuring the shape of necrosis}
\label{indices}

By employing the integral of the interior function with respect to subcomplexes we define the following:

\begin{definition}
    Let $K\subseteq \mathbb{R}^n$ be a simplicial complex and $S\subseteq K$ a subcomplex. Consider the associated interior functions $I_K$ and $I_S$. Let us define subcomplex lacunarity as
    \begin{equation*}
        \omega(K,S)=\frac{\int_{\mathbb{R}^n}I_S(x)dx}{\int_{\mathbb{R}^n}I_K(x)dx}
    \end{equation*}
    if $\int_{\mathbb{R}^n}I_K(x)dx>0$, otherwise $\omega(K,P)=0$.
\end{definition}

The concept of subcomplex lacunarity forms the foundation of our framework for analysing the geometry of necrotic conglomeration in Glioblastomas. We apply this concept to core, enhanced and necrosis images to obtain indices tailored to understanding the geometry and topology of necrotic regions.

\begin{definition}
    Let $K\subseteq \mathbb{R}^n$ be a simplicial complex, $E,N\subseteq K$ subcomplexes such that $E\cup N=K$. Here, $K$ corresponds to the complex associated to the core image, $E$ to the enhanced image, and $N$ to the necrosis image.
    \begin{enumerate}
        \item We define the necrotic lacunarity index as
        \begin{equation*}
            \eta=\omega(K,N),
        \end{equation*}
        \item and the tumour mass lacunarity as
        \begin{equation*}
            \tau=\omega(K,E).
        \end{equation*}
    \end{enumerate}
    These two indices are what we will refer to as primary indices. We define the secondary indices from the primary indices as follows.
    \begin{enumerate}
        \item We define lacunarity disorder as
        \begin{equation*}
            \sigma=\tau-\eta,
        \end{equation*}
        \item and the lacunarity polarity as
        \begin{equation*}
            \rho=\frac{\tau+\eta}{2}.
        \end{equation*}
    \end{enumerate}
\end{definition}

\begin{example}
    Now that we have defined indices for measuring geometric properties in Glioblastoma MRIs, we proceed by analysing what each index captures. Figure \ref{fig:fig2} presents several thresholded MRIs of glioblastomas alongside their corresponding index values.

    \begin{figure}[h]
        \centering
        \includegraphics[width=0.9\linewidth]{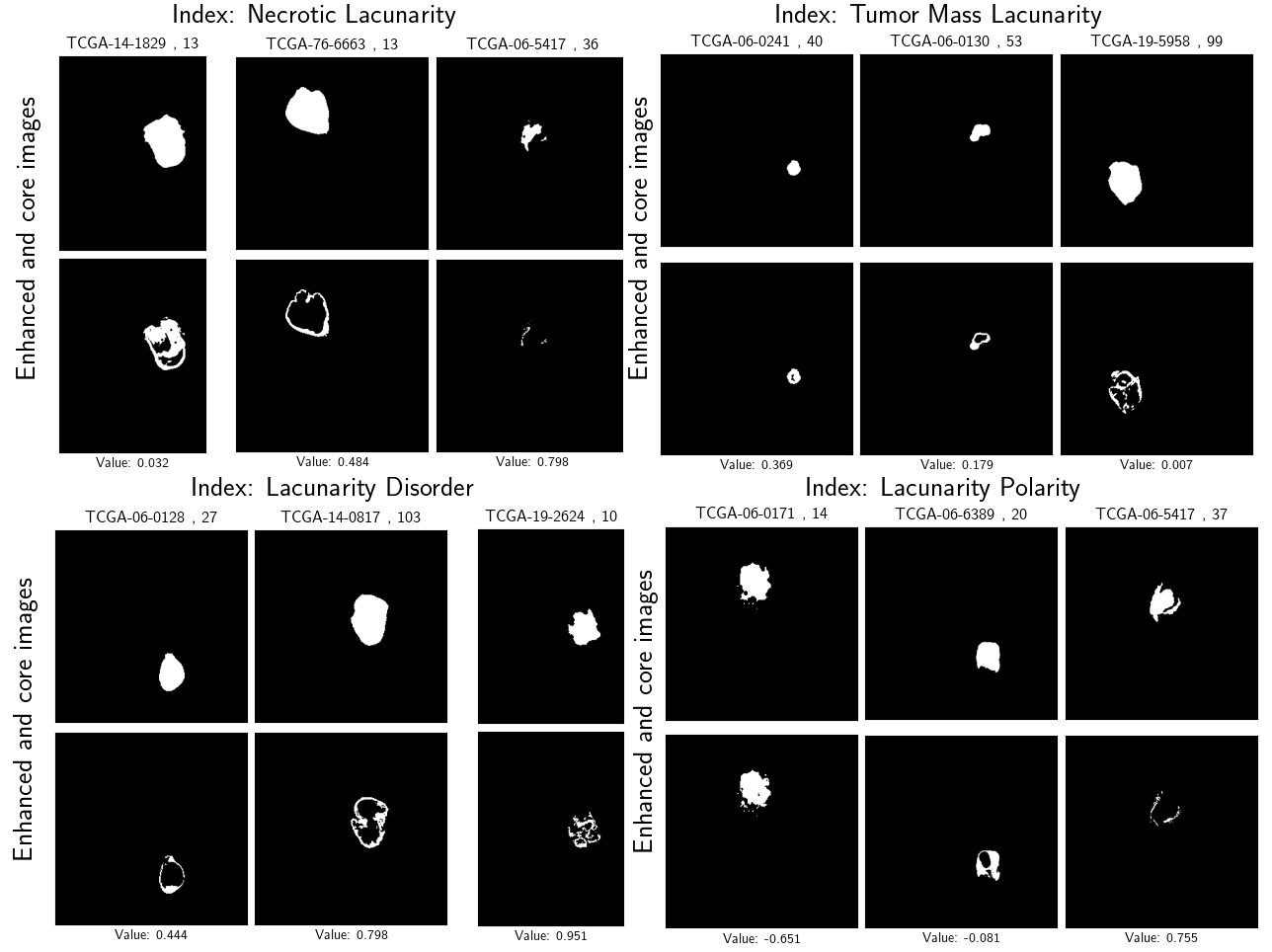}
        \caption{The image presents a table displaying various thresholded MRIs of glioblastomas alongside their corresponding primary and secondary index values. Each table entry contains three pairs of images: the upper row shows core versions, while the lower row displays their corresponding enhanced versions. Above each image pair, the patient tag and image number are indicated. The index values for each pair are displayed at the bottom of the images and are arranged in ascending order from left to right.}
        \label{fig:fig2}
    \end{figure}

    For the images in the necrotic lacunarity section of the table, observe that as the index value increases, the clustered necrotic regions progressively encapsulate a larger portion of the tumour. In the leftmost image pair, multiple necrotic clusters cover approximately 50\% of the total tumour area. The central image depicts a necrotic region consisting of a single cluster that encloses most of the tumour. Finally, the rightmost image shows a tumour where the necrotic region encompasses nearly the entire tumour area.

    The images in the tumour mass lacunarity section of Figure \ref{fig:fig2} illustrate a trend where tumour area clusters become larger as the index value increases. The leftmost image depicts a tumour primarily composed of tumour tissue. In the central image, the necrotic area is more prominent, with a smaller tumour region cluster. The rightmost image shows a tumour where the remaining tumour tissue is concentrated into elongated, thin filaments.

    The lacunarity disorder section of the figure illustrates how, as the index value increases, the tumour becomes increasingly fragmented into smaller clusters of both necrotic and tumour regions. In the leftmost image, two well-defined clusters of necrotic and tumour regions are visible. In the central image, the necrotic region appears more fragmented, a trend that becomes even more pronounced in the rightmost image.

    Finally, the lacunarity polarity section of the figure demonstrates how, as the index value increases, the dominant clusters in the image transition from tumor regions to primarily necrotic regions. Additionally, it should be noted that all index values fall within the range $[0,1]$, except lacunarity polarity which falls within $[-1,1]$. This appears to be an inherent property of these indices.
\end{example}

By constructing a vector consisting of the primary indices corresponding to a set of images, we obtain a point in the space $[0,1]^2$. The location of this point in the space correlates to the geometry of necrosis in the tumour. We will refer to the resulting plot as the {\bf primary index diagram}.

\begin{example}
    Let us analyze how the images from the previous example are positioned within the primary index diagram and how different sections of the diagram correspond to the geometric characteristics of necrosis in the images. Figure \ref{fig:fig3} presents the primary indices of these images, with annotations indicating their respective image tags.

    \begin{figure}[h]
        \centering
        \includegraphics[width=0.6\linewidth]{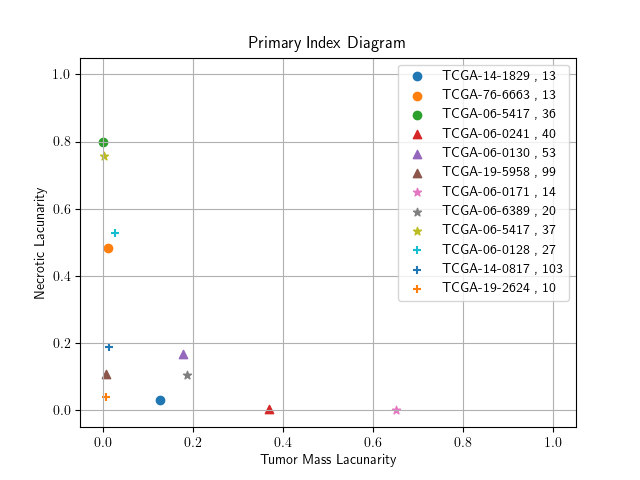}
        \caption{Primary index diagram of thresholded glioblastoma MRIs presented in Example 3.1. The different index groups are distinguished by the shape of the markers in the plot, while the legend indicates the corresponding image tags.}
        \label{fig:fig3}
    \end{figure}

    Note that images with a low degree of tumor mass lacunarity are positioned on the left side of the diagram, while those with a high degree of necrosis are located toward the upper side. A high degree of lacunarity polarity corresponds to points in the upper left corner, whereas a high degree of lacunarity disorder is associated with points near the lower left corner of the diagram.
    
    Images consisting of a single dominant necrotic cluster that encompasses most of the tumor are located in the upper left section of the diagram, including examples such as TCGA-06-5417 (image 36) and TCGA-06-5417 (image 37). Additionally, images with a high degree of necrosis but featuring a more prominent cluster of tumor area are also positioned in this region, though they exhibit lower values of necrotic lacunarity, this includes: TCGA-06-0128 (image 27) and TCGA-70-6663 (image 13).

    Images with an intermediate value of lacunarity polarity are located in the lower left section of the diagram, which also corresponds to a higher degree of lacunarity disorder. This region includes tumors with complex necrotic structures. Images such as TCGA-19-59-5958 (image 99) and TCGA-19-2624 (image 10), which exhibit a high degree of fragmentation, are positioned closest to the lower left corner, indicating a high level of lacunarity disorder. Additionally, this section includes images such as TCGA-06-0130 (image 53) and TCGA-06-6389 (image 20), which display intermediate lacunarity polarity and lower lacunarity disorder, representing tumors with well-defined clusters of both necrotic and tumor regions.

    Finally, the lower right region of the diagram contains images where a large tumor region dominates, encompassing nearly the entire tumor, with only small necrotic clusters present. Examples include TCGA-06-0171 (image 14) and, to a lesser extent, TCGA-06-0241 (image 40).
\end{example}

One of the key strengths of the primary index diagram approach is its versatility, as it can be applied whenever three images are available. In our case, we have two main images, the core and the enhanced image, while the third, the necrosis image, is derived from the first two. However, this technique can be utilized in any scenario where three images are present, as long as they follow the subcomplex structure outlined in Definition 3.2.


\section{Primary index based clustering analysis}
\label{clustering}

We perform a clustering analysis of the primary index diagram using glioblastoma MRI data from 93 patients. The images analyzed are a subset of the TCGA-GBM dataset, the pictures we will be utilizing have already undergone preprocessing steps of tumor segmentation and image thresholding, more on this in the remarks section. To compute the interior function values, we use the Python software Cubical Ripser \cite{KajiRipser} for persistent homology of cubical complexes and Persim \cite{scikitTDA} for persistence landscape computation. The integral of the interior function, required for calculating the primary indices, is computed using the 2D trapezoidal rule. Due to high computation costs of the algorithms, HPC clusters where used. 

First, we perform cluster analysis on all the images in the dataset, followed by cluster analysis of one of the clusters found in the initial analysis, and we finish by performing cluster analysis using the median vectors per patient as data points. The main tool used for cluster analysis is Silhouette analysis \cite{silhouette}, which we implemented using the Python Library Scikit-Learn \cite{scikitlearn}. To avoid issues caused by a low number of activated pixels in the images (see Remarks section), we excluded the bottom 10\% of images with the lowest number of activated pixels. The resulting dataset consists of 1065 images. Figure \ref{fig:fig4} shows the primary index diagram of the entire dataset. Since the figure reveals no significant outliers, we proceed with the clustering analysis using this dataset.

\begin{figure}[h]
        \centering
        \includegraphics[width=0.6\linewidth]{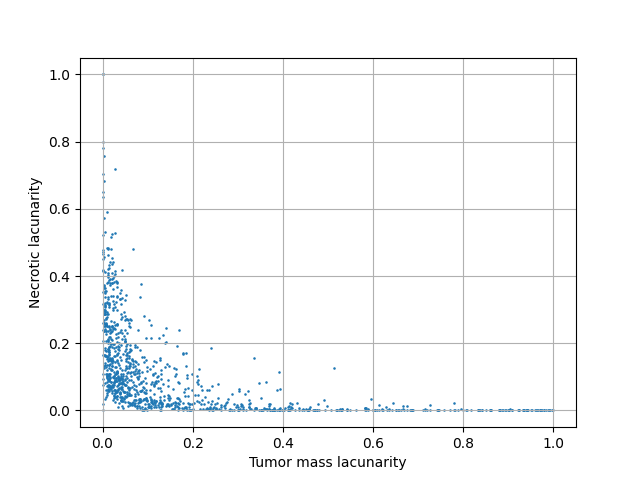}
        \caption{Primary index diagram pertaining to all the data in the dataset. The diagram shows that the data points are contained in $[0,1]^2$, and the data points are heavily situated around the lower and rightmost areas of the diagram.}
        \label{fig:fig4}
    \end{figure}
    
Initially, we computed Silhouette scores for the number of clusters ranging from $3$ to $18$. Table \ref{table:table1} shows the computed Silhouette scores.

\begin{table}[h]
\begin{tabular}{|cllllllll|}
\hline
\multicolumn{1}{|c|}{\textbf{Number of clusters}} & \multicolumn{1}{c|}{3}      & \multicolumn{1}{c|}{4}      & \multicolumn{1}{c|}{5}      & \multicolumn{1}{c|}{6}      & \multicolumn{1}{l|}{7}      & \multicolumn{1}{l|}{8}      & \multicolumn{1}{l|}{9}      & 10     \\ \hline
\multicolumn{1}{|c|}{\textbf{Silhouette score}}   & \multicolumn{1}{c|}{0.4940} & \multicolumn{1}{c|}{0.5435} & \multicolumn{1}{c|}{0.4850} & \multicolumn{1}{c|}{0.4830} & \multicolumn{1}{l|}{0.4645} & \multicolumn{1}{l|}{0.4693} & \multicolumn{1}{l|}{0.4383} & 0.4423 \\ \hline
\multicolumn{9}{|l|}{}                                                                                                                                                                                                                                                       \\ \hline
\multicolumn{1}{|c|}{\textbf{Number of clusters}} & \multicolumn{1}{l|}{11}     & \multicolumn{1}{l|}{12}     & \multicolumn{1}{l|}{13}     & \multicolumn{1}{l|}{14}     & \multicolumn{1}{l|}{15}     & \multicolumn{1}{l|}{16}     & \multicolumn{1}{l|}{17}     & 18     \\ \hline
\multicolumn{1}{|c|}{\textbf{Silhouette score}}   & \multicolumn{1}{l|}{0.4328} & \multicolumn{1}{l|}{0.4515} & \multicolumn{1}{l|}{0.4380} & \multicolumn{1}{l|}{0.4423} & \multicolumn{1}{l|}{0.4403} & \multicolumn{1}{l|}{0.3982} & \multicolumn{1}{l|}{0.4022} & 0.4160 \\ \hline
\end{tabular}
\caption{Table depicting Silhouette scores for number of clusters $n=3,\cdots,18$.}
\label{table:table1}
\end{table}

The table indicates that the best candidates for the number of clusters are $n=3,4,5,6$, with a clearly highest Silhouette score at $n=4$. Figure \ref{fig:silhouette1} shows the Silhouette plots for the selected number of clusters.

\begin{figure}[h]
    \centering
    \includegraphics[width=1\linewidth]{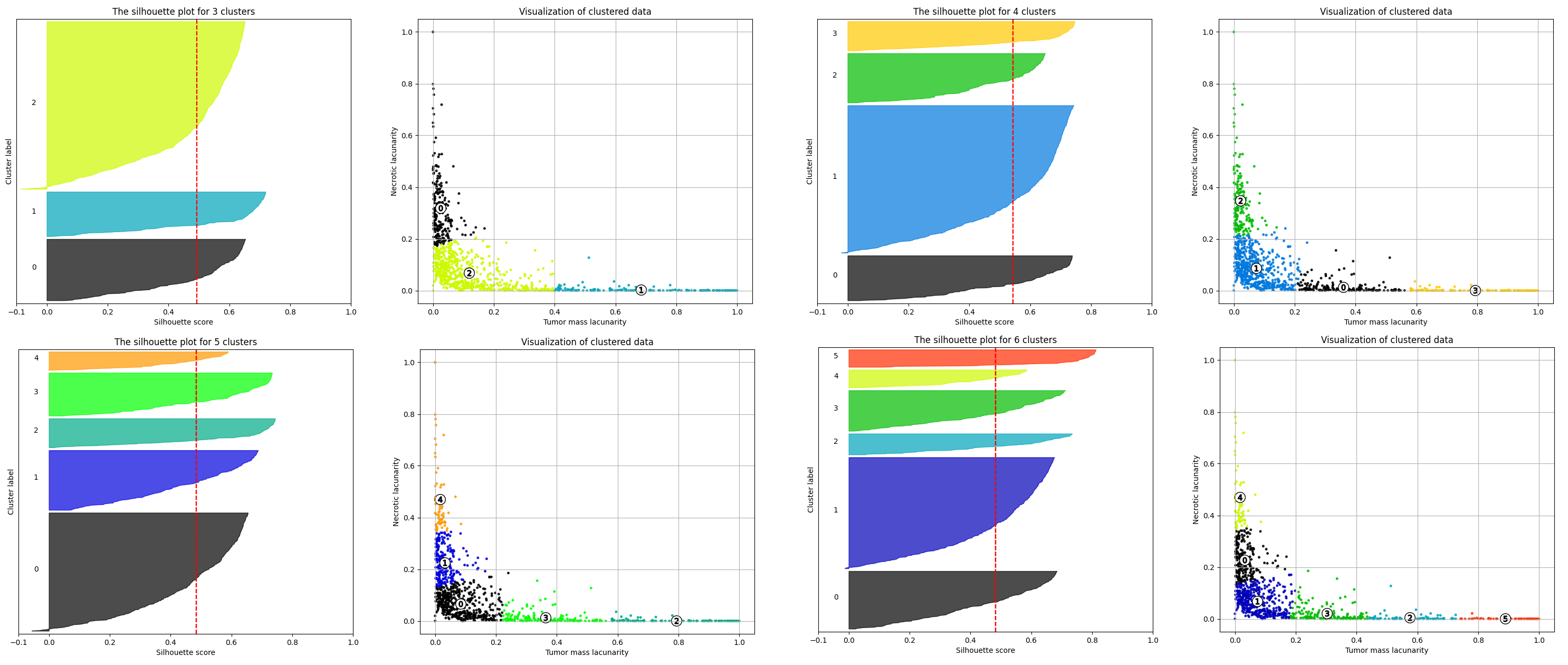}
    \caption{Image depicting Silhouette plots and their respecting cluster plots for number of clusters $n=3,4,5,6$ using K-means.}
    \label{fig:silhouette1}
\end{figure}

The second highest value of Silhouette score is observed at $n=3$. Note that for $n=3$, the Silhouette plot shows a few data points in the primary index diagram with very low Silhouette scores, indicating that the K-means algorithm was unable to properly group those points into a cluster. A similar pattern, though less pronounced, is observed for $n=5$. For $n=4,6$, a few points exhibit negative Silhouette scores; however, $n=4$ stands out with the highest Silhouette score by a significant margin, a value that indicates good performance of K-means algorithm when clustering the data. This analysis suggests that the optimal number of clusters is $n=4$.

Now, let us examine the different clusters and their properties. Figure \ref{fig:cluster_total} depicts the identified clusters.

\begin{figure}[h]
    \centering
    \includegraphics[width=0.4\linewidth]{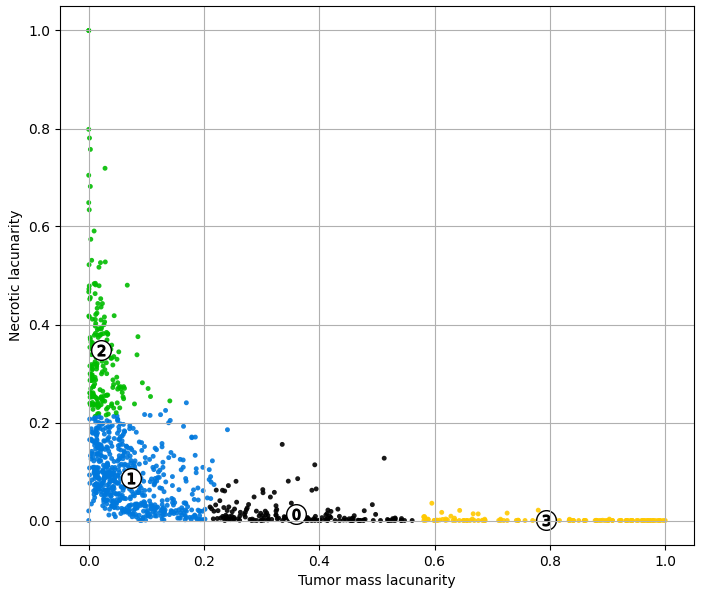}
    \caption{Scatter plot depicting the clusters found for the data in the cohort using K-means, number of clusters $n=4$.}
    \label{fig:cluster_total}
\end{figure}

Cluster 3 is positioned in the lower right corner of the diagram. The images within this cluster correspond to cases with minimal lacunarity polarity, where necrotic regions are primarily located along the tumor's contour. Figure \ref{fig:cluster3} presents representative images from this cluster, alongside visualizations of their respective interior functions.

\begin{figure}[h]
    \centering
    \includegraphics[width=0.5\linewidth]{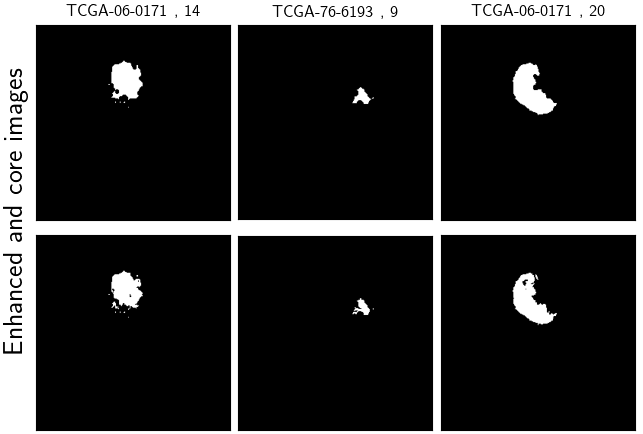}
    \caption{The figure depicts some example of images whose primary indices can be found in cluster 3. The images in the top row are core images while the pictures on the bottom row are the respective enhanced images.}
    \label{fig:cluster3}
\end{figure}

Cluster 0 consists of images with low lacunarity polarity. These images tend to exhibit limited variation in the degree of disorder within the necrotic region. Images in this cluster differentiate themselves from images in cluster 2 by exhibiting larger accumulation of necrosis found in the middle of the tumor, as opposed to the contour. Figure \ref{fig:cluster0} presents representative images from this cluster, along with visualizations of their corresponding interior functions.

\begin{figure}[h]
    \centering
    \includegraphics[width=0.5\linewidth]{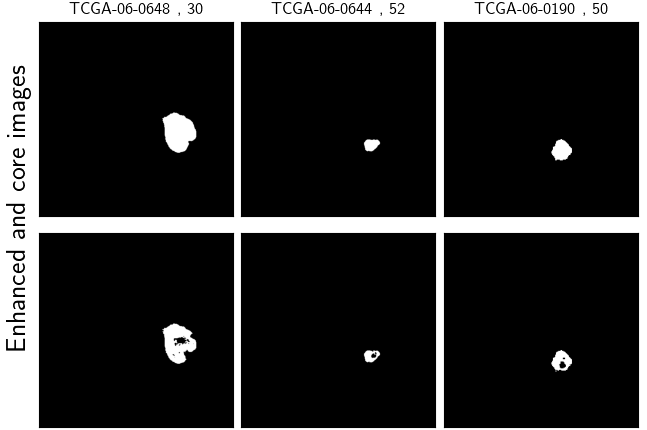}
    \caption{The figure depicts some example of images whose primary indices can be found in cluster 0. The images in the top row are core images while the pictures on the bottom row are the respective enhanced images.}
    \label{fig:cluster0}
\end{figure}

Cluster 1 consists of images with an intermediate level of lacunarity polarity, exhibiting a wide range of disorder within the necrotic region. Figure \ref{fig:cluster1} presents some representative examples from this cluster.

\begin{figure}[h]
    \centering
    \includegraphics[width=0.5\linewidth]{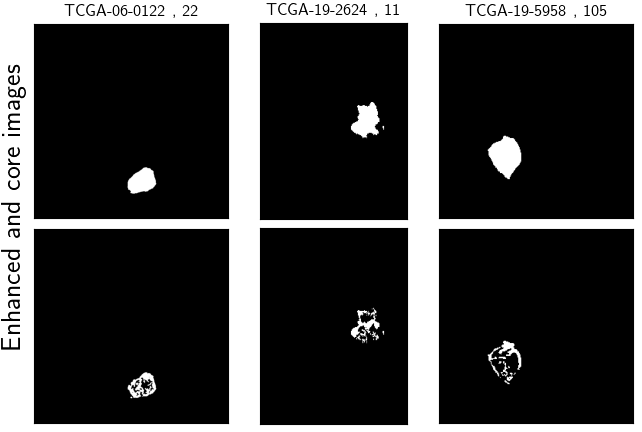}
    \caption{The figure depicts some example of images whose primary indices can be found in cluster 1. The images in the top row are core images while the pictures on the bottom row are the respective enhanced images.}
    \label{fig:cluster1}
\end{figure}

Finally, cluster 2 consists of images with a high degree of lacunarity polarity. As is typical of tumor necrosis, these images show that the necrotic regions tend to cluster at the center of the tumor. Figure \ref{fig:cluster2} presents some representative examples from this cluster.

\begin{figure}[h]
    \centering
    \includegraphics[width=0.5\linewidth]{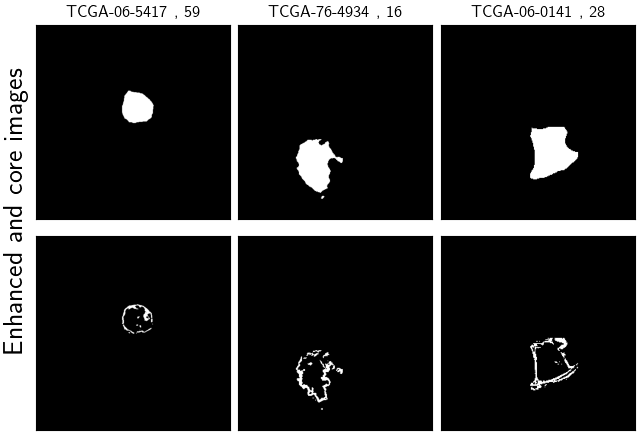}
    \caption{The figure depicts some example of images whose primary indices can be found in cluster 2. The images in the top row are core images while the pictures on the bottom row are the respective enhanced images.}
    \label{fig:cluster2}
\end{figure}

As previously mentioned, the clusters identified by the K-means algorithm are primarily based on lacunarity polarity. Figure \ref{fig:distplot_polarity} presents distribution plots of the lacunarity polarity index for the different clusters. The order of lacunarity polarity follows this pattern: cluster 3 exhibits very low values, cluster 0 shows negative values close to zero, cluster 1 includes both positive and negative values around zero, and cluster 2 has the highest values. This analysis supports the idea that clustering can be effectively based on the degree of lacunarity polarity. Since lacunarity polarity and necrotic fraction are closely related, this also suggests that the main characteristic of the clusters found is the degree of necrosis.

\begin{figure}[h]
    \centering
    \includegraphics[width=0.7\linewidth]{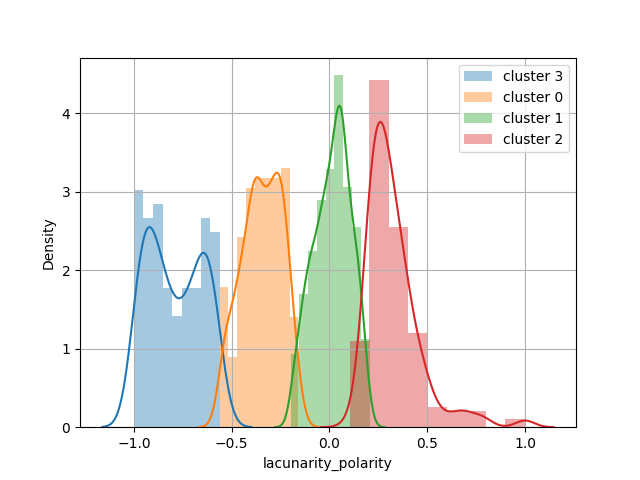}
    \caption{Distribution plot for lacunarity polarity grouped by clusters found in the analysis.}
    \label{fig:distplot_polarity}
\end{figure}

Although the cluster analysis suggests that lacunarity disorder has a limited effect on separating tumor images, we performed a secondary clustering analysis on cluster 1 (images with intermediate lacunarity polarity) to show that disorder and polarity are characteristics that can define necrosis geometry in tumors. Table \ref{table:table2} shows the Silhouette scores for cluster numbers $n = 3, \ldots, 10$.

\begin{table}[h]
\begin{tabular}{|c|c|c|c|c|l|l|l|l|}
\hline
\textbf{Number of clusters} & 3      & 4      & 5      & 6      & 7      & 8      & 9      & 10     \\ \hline
\textbf{Silhouette score}   & 0.4195 & 0.4319 & 0.4092 & 0.3757 & 0.3888 & 0.3940 & 0.3747 & 0.3965 \\ \hline
\end{tabular}
\caption{Table showing the Silhouette scores for clustering analysis of centermost cluster.}
\label{table:table2}
\end{table}

Observe that the computed Silhouette scores are considerably lower than those obtained in the previous analysis, with no score standing out as substantially higher than the others. This indicates the absence of well-defined clusters. Figure \ref{fig:silhouette_new_total} presents the Silhouette plot and the resulting clusters for the three cluster counts with the highest Silhouette scores: $n = 3, 4, 5$.

\begin{figure}[h]
    \centering
    \includegraphics[width=0.9\linewidth]{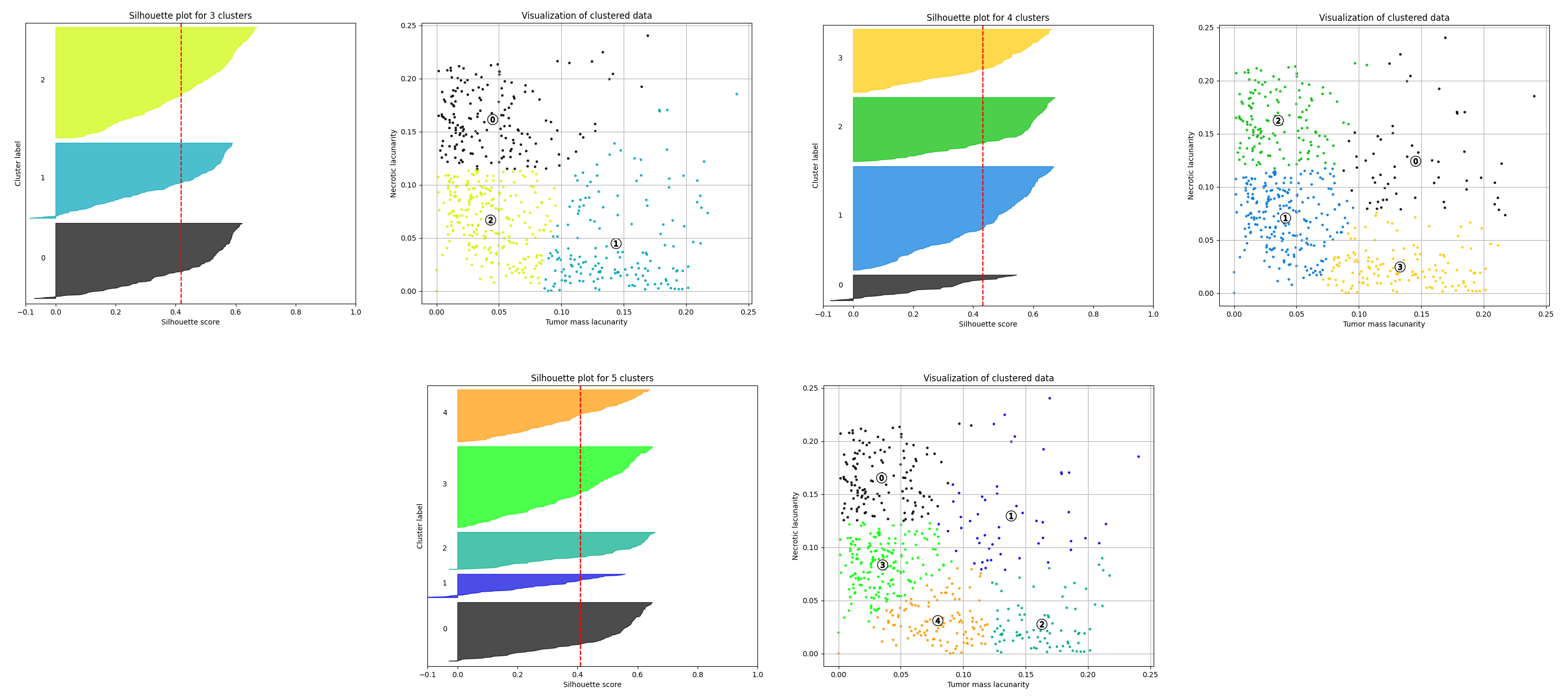}
    \caption{The figure depicts the Silhouette plots alongside the respective scatter plots of the different clusters.}
    \label{fig:silhouette_new_total}
\end{figure}

Regardless of the number of clusters, the Silhouette plots consistently show a few data points with very low Silhouette scores. This suggests that some points cannot be properly clustered using the K-means algorithm. Figure \ref{fig:silhouette_new_total} indicates that the misclassified points tend to be located near the center of the plot. However, selecting four clusters ($n = 4$) yields the highest average Silhouette score and appears to result in the highest number of well-positioned data points. Additionally, for $n = 4$, most clusters (except cluster 0) exhibit relatively high average Silhouette scores, suggesting that the clusters are reasonably well-defined. This analysis supports choosing $n = 4$ as the optimal number of clusters, although we emphasize that this does not conclusively establish that the four clusters are well-defined.

The identified clusters can be described as follows. Cluster 0 includes images with low disorder and an intermediate level of lacunarity polarity. Cluster 1 consists of images with very high disorder and intermediate lacunarity polarity. Cluster 2 contains images with high lacunarity polarity and intermediate disorder. Finally, cluster 3 comprises images with low lacunarity polarity and intermediate disorder. 

\begin{figure}
    \centering
    \includegraphics[width=0.8\linewidth]{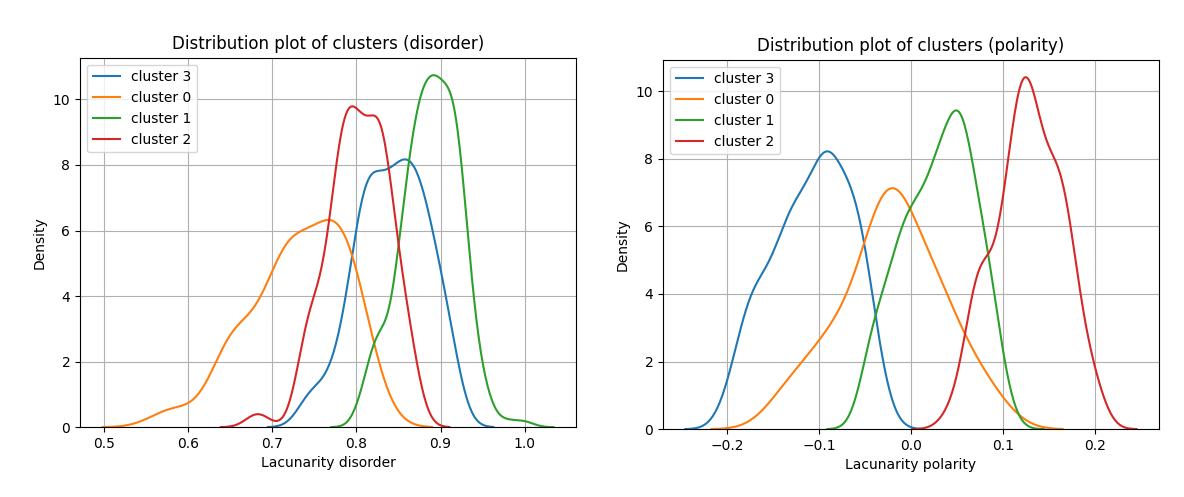}
    \caption{Distribution plots of lacunarity polarity and lacunarity disorder for clusters found in innermost part of primary index diagram.}
    \label{fig:distplot_new}
\end{figure}

Figure \ref{fig:distplot_new} presents distribution plots of lacunarity disorder and the lacunarity polarity index for the different clusters, the figure supports the proposed classification.

Figure \ref{fig:images_cluster_new} presents example images for the different clusters. Cluster 0 tends to include images with well-defined clusters of both necrotic and tumor regions. Cluster 1 contains tumors with similar aggregation patterns of necrosis and tumor regions but exhibits significantly more fragmentation and disorder. Cluster 2 primarily consists of images with larger necrotic clusters than tumor regions, showing relatively low disorder within the necrotic areas. Finally, cluster 3 mostly includes images with high aggregation of tumor regions and some necrotic clusters, but with minimal disorder in the necrotic areas.

\begin{figure}[h]
    \centering
    \includegraphics[width=0.8\linewidth]{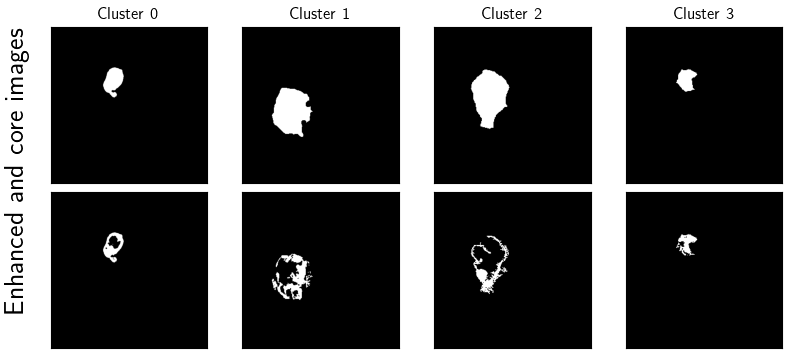}
    \caption{The figure shows some exampled of the clusters found in the analysis for points in the innermost part of the primary index diagram}
    \label{fig:images_cluster_new}
\end{figure}

To conclude this section, we now perform cluster analysis using the median vector per patient on the primary index diagram. The cohort consists of 93 patients. Figure \ref{fig:median_data} displays the data points for each patient. One important characteristic of this figure is that the data points have gathered closer to the lower right corner of the diagram as opposed to before, this indicates that most images per patient tend to be in this area of the diagram.

\begin{figure}[h]
    \centering
    \includegraphics[width=0.6\linewidth]{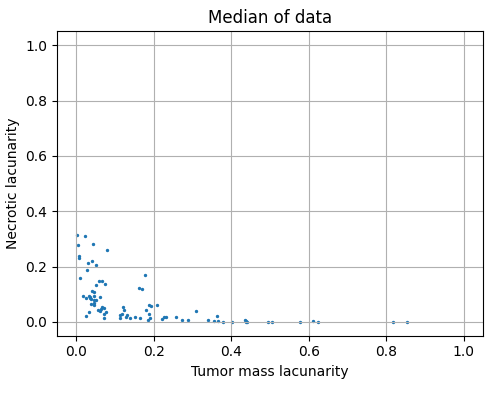}
    \caption{The plot shows the primary index diagram for median vector of primary indices of patient in the cohort.}
    \label{fig:median_data}
\end{figure}

Table \ref{table:table3} shows Silhouette scores for number of cluster equal to $n=3,...10$.

\begin{table}[h]
\begin{tabular}{|c|c|c|c|c|l|l|l|l|}
\hline
\textbf{Number of clusters} & 3      & 4      & 5      & 6      & 7      & 8      & 9      & 10     \\ \hline
\textbf{Silhouette score}   & 0.5233 & 0.5730 & 0.5134 & 0.4960 & 0.4916 & 0.4873 & 0.4758 & 0.5030 \\ \hline
\end{tabular}
\caption{The table shows the Silhouette scored for number of clusters $n=3,\cdots ,10$, for the cluster analysis using median of primary indices.}
\label{table:table3}
\end{table}

Observe that the highest value is reached at $n = 4$, with a value significantly higher than the others. This suggests that the data can be effectively grouped into four clusters. Figure \ref{fig:silhouette_final} shows the Silhouette plot for $n = 4$ clusters. Notably, there appears to be only one data point that is not well grouped into a cluster.

\begin{figure}[h]
    \centering
    \includegraphics[width=0.9\linewidth]{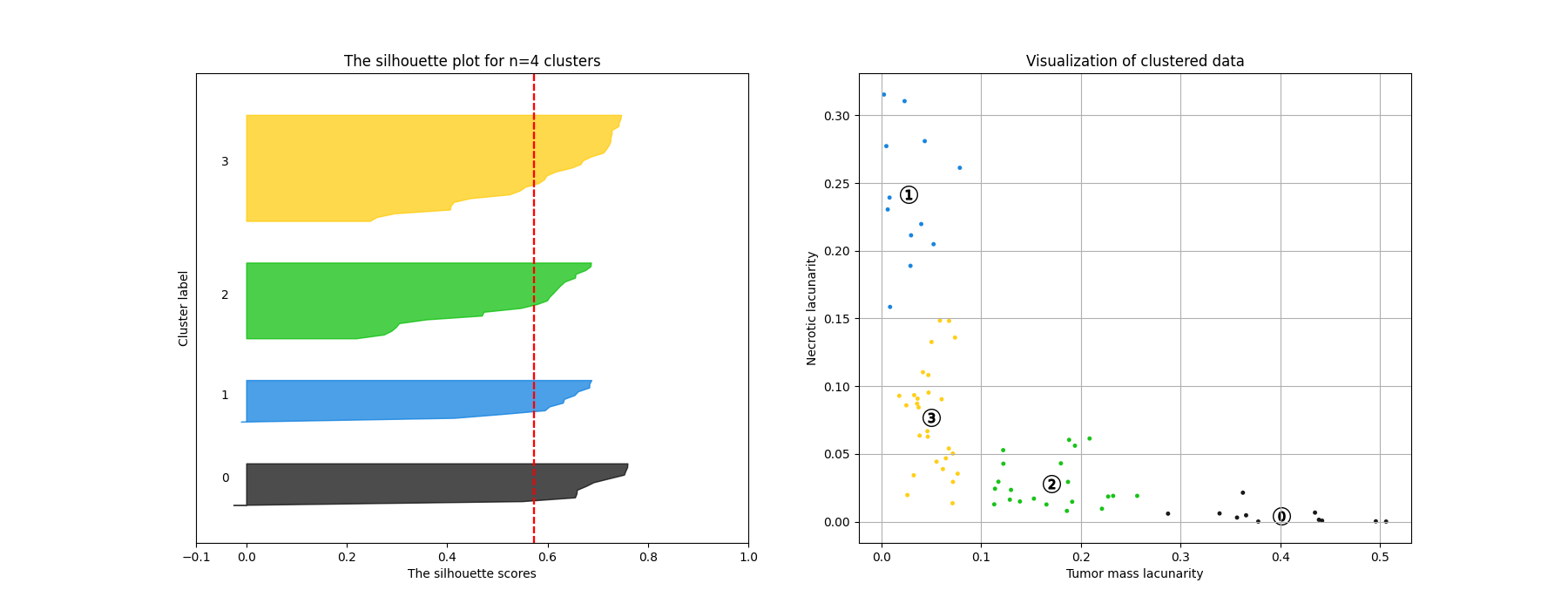}
    \caption{The figure shows the Silhouette plot for $n=4$ cluster alongside the corresponding scatter plot for the found clusters using K-means.}
    \label{fig:silhouette_final}
\end{figure}

This analysis is consistent with the previous clustering results, indicating that the grouping occurs into four clusters, mostly based on the level of lacunarity polarity. This suggests that a classification of lacunarity polarity may be possible for the geometry of necrosis in tumours. However, further research is needed to confirm this conclusion. In particular, to thoroughly validate this finding, it will be important to analyze three-dimensional cubical complexes of tumour images to determine if this pattern persists.


\section{Remarks}
\label{remarks}

One of the main challenges in this research was the computational cost associated with repeatedly computing persistent homology. The dataset primarily consisted of images with resolutions of $256\times 256$ and $512\times 512$. To compute the integrals of the corresponding interior functions, we employed a $100\times 100$ grid, resulting in computation times of approximately 7 minutes per $256\times 256$ image and 40 minutes per $512\times 512$ image. These results highlight the substantial computational demands of calculating the primary indices, particularly in the absence of high-performance computing resources. 

In the definition of subcomplex lacunarity (def. 3.1), we specified how to handle cases where the denominator equals zero. Ideally, this definition should be applied when the denominator is nonzero, which is expected with sufficiently high image resolution. However, when working with 2D MRI scans of tumors, zero-valued denominators are almost inevitable, since the edges of the tumors give rise to images with very few activated pixels. Therefore, careful monitoring is recommended to prevent significant data distortion.


\section{Data availability}
\label{sec:data}

As previously mentioned, the data used in this paper is a subset of the TCGA-GBM cohort which can be found at \url{https://portal.gdc.cancer.gov/projects/TCGA-GBM}. The original dataset was obtained from the GitHub repository associated with the paper \cite{SECTGBM} and it available at \url{https://github.com/lorinanthony/SECT.git}. Our dataset, which includes the newly computed necrosis data along with the original data, a Python script with the function to calculate the interior function, and the calculation of the indices can be accessed on our GitHub page \url{https://github.com/fjtellez/TDA-IF-REPO.git}.


\hfill

\textit{Acknowledgments} \\
Part of this work was conducted while the first author conducted Mitacs GRI project supervised by the second author. We also extend our gratitude to the Digital Research Alliance of Canada for providing access to their HPC clusters. The code used to draw the silhouette plots can be accessed at \href{https://scikit-learn.org/stable/auto_examples/cluster/plot_kmeans_silhouette_analysis.html}{scikit-learn.org}.



\begin{thebibliography}{CJS}

\bibitem{Adams}
Adams, H., et al. (2017). Persistence images: A stable vector representation of persistent homology. Journal of Machine Learning Research, 18(8), 1-35.


\bibitem{Barker}
Barker, F. G., Davis, R. L., Chang, S. M.,  Prados, M. D. (1996). Necrosis as a prognostic factor in glioblastoma multiforme. Cancer: Interdisciplinary International Journal of the American Cancer Society, 77(6), 1161-1166.

\bibitem{BubenikSTDA}
Bubenik, P. Statistical topological data analysis using persistence landscapes. J. Mach. Learn. Res. 16.1 (2015): 77-102.

\bibitem{BubenikCAT}
Bubenik, P., Scott, J.A. Categorification of Persistent Homology. Discrete Comput Geom 51, 600–627 (2014).

\bibitem{Carlsson}
Carlsson, G.,  Vejdemo-Johansson, M. (2021). Topological data analysis with applications. Cambridge University Press.


\bibitem{Curtin}
Curtin, L., Whitmire, P., White, H. et al.
Shape matters: morphological metrics of glioblastoma imaging abnormalities as biomarkers of prognosis. 
Sci Rep 11, 23202 (2021). 
https://doi.org/10.1038/s41598-021-02495-6

\bibitem{SECTGBM}
Crawford, L., Monod, A., Chen, A. X., Mukherjee, S., Rabadán, R. (2019). Predicting Clinical Outcomes in Glioblastoma: An Application of Topological and Functional Data Analysis. Journal of the American Statistical Association, 115(531), 1139–1150.


\bibitem{Hatcher}
Hatcher, Allen. Algebraic topology. Cambridge University Press, 2002.

\bibitem{KajiRipser}
Kaji, S., Sudo, T.,  Ahara, K. (2020). Cubical ripser: Software for computing persistent homology of image and volume data. arXiv preprint arXiv:2005.12692.


\bibitem{fraction}
Khalil, A. A., Horsman, M. R.,  Overgaard, J. (1995). The importance of determining necrotic fraction when studying the effect of tumour volume on tissue oxygenation. Acta Oncologica, 34(3), 297-300.

\bibitem{Meng}
Meng, K., Wang, J., Crawford, L., Eloyan, A. Randomness of Shapes and Statistical Inference on Shapes via the Smooth Euler Characteristic Transform. Journal of the American Statistical Association, 1–25 (2024). https://doi.org/10.1080/01621459.2024.2353947

\bibitem{scikitlearn}
Pedregosa et al. Scikit-learn: Machine Learning in Python. JMLR 12, pp. 2825-2830, 2011.

\bibitem{Blumberg}
Rabadan R., Blumberg A.J. Topological Data Analysis for Genomics and Evolution: Topology in Biology. Cambridge University Press; 2019.

\bibitem{Raza}
Raza, S., et al. Necrosis and Glioblastoma: A Friend or a Foe? A Review and a Hypothesis. Neurosurgery 51(1):p 2-13, July 2002. 

\bibitem{silhouette}
Rousseeuw, P.
Silhouettes: A graphical aid to the interpretation and validation of cluster analysis,
Journal of Computational and Applied Mathematics,
Volume 20,
1987, 53--65,
https://doi.org/10.1016/0377-0427(87)90125-7

\bibitem{scikitTDA}
Saul, N., Tralie, C. (2019). Scikit-TDA: Topological Data Analysis for Python. Zenodo. http://doi.org/10.5281/zenodo.2533369

\bibitem{StabWass}
Skraba, P., Turner, K. (2020). Wasserstein stability for persistence diagrams. arXiv preprint arXiv:2006.16824.

\bibitem{PHT}
Turner, K., Mukherjee, S.,  Boyer, D. M. (2014). Persistent homology transform for modeling shapes and surfaces. Information and Inference: A Journal of the IMA, 3(4), 310-344.

\bibitem{Yee}
Yee, P.P., Wei, Y., Kim, SY. et al. Neutrophil-induced ferroptosis promotes tumor necrosis in glioblastoma progression. Nat Commun 11, 5424 (2020). https://doi.org/10.1038/s41467-020-19193-y



\end{thebibliography}
\end{document}